\documentclass{article}
\usepackage{amsmath}
 \usepackage{amsfonts}
\usepackage{amsthm}
\usepackage{bbm}
\usepackage{hyperref}
\usepackage{tikz}
\usepackage{float}
\theoremstyle{plain}

\numberwithin{equation}{section}

\newtheorem{thm}{Theorem}[section]
\newtheorem{remark}[thm]{Remark}
\newtheorem{prop}[thm]{Proposition}

\newtheorem{cor}[thm]{Corollary}
\newtheorem{lem}[thm]{Lemma}

\begin{document}
\noindent

\title{The values of a family of Cauchy transforms}
\author{Kevin F. Clancey}
\date{July 2022}
\maketitle
\begin{abstract} The family of Cauchy transforms \[C_{g}(z,w) =  -\frac{1}{\pi}\int_{\mathbb{C} }
\frac{g(u)}{\overline{u-w} (u-z) }  da(u ),\] where the measurable function $g$ with compact (essential) support satisfies $0 \leq g\leq 1,$ and suitably defined for all complex $z, w,$ is closely connected to the theory of Hilbert space operators with one-dimensional self-commutators.  Based on these connections one can derive the inequality \[\vert 1-\exp C{g}(z,w)\vert\leq 1. \] Here, using elementary methods, a direct proof of this inequality is given. The approach involves a detailed study of the convex family of integrals \[I_{g}= -\frac{1}{\pi}\int_{\mathbb{C} }
\frac{g(u)}{\overline{u+1} (u-1) } da(u),\] where $g$ varies over the set of measurable functions with compact support satisfying $0 \leq g\leq 1.$ These integrals are transformed to a tractable form using a parametriztion of the plane minus the real axis using the family of circles passing though the points $+1,-1.$ The characeristic functions of discs bounded by these circles are unique points in the boundary of the convex set of values of the family of integrals.
\end{abstract}

\section{Introduction}Let $\mathcal{G}_1$ denote the equilvalence classes of compactly supported bounded measurable functions $g$ on the complex plane $\mathbb{C}$ that satisfy $0\leq g\leq1 . $ For $g\in\mathcal{G}_1$  let $E_{g}(z,w)=E(z,w)$ be defined by 
\begin{equation}\begin{gathered}\label{key} E(z,w) = \exp- \frac{1}{\pi}\int_{\mathbb{C} }
\frac{u-w}{u-z }
\frac{g(u)}{\vert u-w\vert^2} da(u )= \\ \exp -\frac{1}{\pi}\int_{\mathbb{C} }
\frac{g(u)}{\overline{u-w} (u-z) }  da(u )\end{gathered}\end{equation}
for $z\neq w,$ with $E(w,w)$ defined to be $0$ if $\frac{1}{\pi}\int_{\mathbb{C}}g(u)\vert u-w\vert^{-2} da(u) =+ \infty$ and equal to \[\exp-\frac{1}{\pi}\int_{\mathbb{C}} \frac{g(u)}{\vert u-w\vert^2} da(u )\] when \begin{equation}\label{finite}\frac{1}{\pi}\int_{\mathbb{C}} \frac{g(u)}{\vert u-w\vert^2} da(u )<\infty .\end{equation}  Here $a$ denotes area measure.

The function $E_g$ has a basic connection with the study of bounded linear operators on a Hilbert space with one-dimensional self-commutator. Here these connections will only be described briefly at the end of Section 2. For a more detailed discussion see \cite{Clancey} and, in particular, the references cited in \cite{Clancey}. For now we note that this operator theory connection implies function theoretic properties of $E_g$ that are not obvious nor easily established independent of this connection. In \cite{Clancey} a direct study of such properties was initiated. In particular, with the above conventions on $\frac{1}{\pi}\int_{\mathbb{C}}g(u)\vert u-w\vert^{-2} da(u)$ a direct proof of the sectional continuity of $E_g(z,w)$ was given.  The goal here is to provide elementary methods to establish 

\begin{equation}\label{ineq} \vert 1-\exp C{g}(z,w)\vert\leq 1  \end{equation}
without connections to operator theory. 

A direct function theoretic investigation of this last inequality involves a detailed description of the values of
\begin{equation}\label{values}C_{g}(z,w) =  -\frac{1}{\pi}\int_{\mathbb{C} }
\frac{g(u)}{\overline{u-w} (u-z)}  da(u), g\in\mathcal{G}_1, z,w\in\mathbb{C} .\end{equation} For w fixed, as a functions of $z,$ the above integrals are the complex Cauchy transforms of the functions $g_{w}(u)=\frac{g(u)}{\overline{u-w}}.$ The conventions on the case when $z=w$  imply that (\ref{ineq}) holds in that case.
For $z\neq w$ the linear change of variables $u(v)=\frac{1}{2}((z-w)v+(z+w)),$ which sends $-1$ to $w$ and $1$ to $z,$  transforms this last integral to the form \begin{equation} -\frac{1}{\pi}\int_\mathbb{C}\frac{g(u(v))}{\overline{v+1}(v-1)}da(v).\end{equation}The potential singularities at $u=z,w$ and $v=-1,1$ in these last two integrands necessitates some care with this assertion. One way to deal with this is to observe that for small $\epsilon >0$ the identity holds off the set of $v=re^{it}$ for $(r,t)\in (1-\epsilon, 1+\epsilon)\times ((-\epsilon,\epsilon)\cup(\pi-\epsilon, \pi+\epsilon)).$ As $\epsilon\rightarrow 0$ the corresponding $u$ and $v$  integrals converge to the integrals over $\mathbb{C}.$

 Thus describing the set of values in (\ref{values}) is equivalent to describing the following convex set of values \begin{equation}\label{reducedvalues}I_{g}=C_{g}(1,-1) =  -\frac{1}{\pi}\int_{\mathbb{C} }
\frac{g(u)}{\overline{u+1} (u-1)}  da(u),\  g\in\mathcal{G}_1 .\end{equation} The path to this description involves an explicit description of the set of boundary values of this convex set that in turn produce a fortuitous chage of variables for the integrals (\ref{reducedvalues}).

Key to understanding the set of values $ I_{g}, g\in\mathcal{G}_{1}$ will be the family $g_\theta = \chi_{D_\theta}, 0<\theta <\pi,$ where $D_\theta$ is the disc bounded by the circle $\Gamma_{\theta}$ of radius $\csc (\theta)$ with center $i\cot (\theta).$ Note that the notation $I_{\theta}$ will be used for $I_{g_{\theta}}.$ These circles pass through the points $-1,1$ and subtend the angle $\theta$ over the chord through these points. Each point $u$ in the complex plane off the real axis is on precisely one circle $\Gamma_{\theta (u)}.$
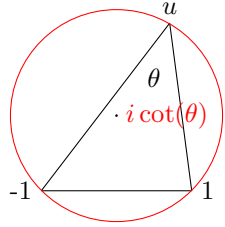
\begin{figure}[H]
\centering
\begin{tikzpicture}
\draw (-1,0) node[left] {-1}- - (1,0) node[right]{1};
\draw (-1,0) -- (.707,2.23);
\draw (1,0) --  (.707,2.23);
\draw[red] (0,1) node[right]{$i\cot(\theta)$} (0,1) circle (1.41);
\draw (0,1) circle (0.01);
\draw (.5,1.5) node {$\theta$};
\draw(.707,2.23) node[above]{$u$};
\end{tikzpicture}
\caption{$\Gamma_{\theta (u)}$}
\end{figure}

Using the Law of Cosines one sees the value of the kernel \begin{equation}\label{ker} k(u)= -\frac{1}{\pi}\frac{1}{\overline{u+1}(u-1)},\: u\neq\pm 1\end{equation} on the circle $\Gamma_{\theta (u)}, \theta\neq\frac{\pi}{2},$ is \begin{equation}\label{rep} k(u)= \frac{1}{\pi}\frac{\cos(\theta)e^{-i\theta}}{1-\vert u\vert ^2} .\end{equation} For $u$ on the unit circle $u\ne\pm 1$\begin{equation}\label{rep2} k(u)=\frac{i}{2\pi}\frac{1}{Im  (u)}.\end{equation}

This family of circles will be used to parametrize the plane minus the real axis. Below in Figure 2  is a picture of this family of circles where the case $\theta =\frac{\pi}{2}$ plays a central role:

\begin{figure}[H]
\centering
\begin{tikzpicture}

\draw (-1,0) node[left] {-1}- - (1,0) node[right]{1};
\draw(0,0) node {0} circle (1);
\draw[red] (0,.58) node{$ic_{\theta}$}circle (1.16);\draw[blue] (0,-.58)circle (1.16);\draw (0,-2)- - (0,2);\draw[red] (0,1) circle (1.41);
\draw[blue](0,-1) circle (1.41);
\draw [blue] (2,-3) node[right]{$\downarrow\pi$} --(2,0)node[right, black]{$\frac{\pi}{2}$}node[left,black]{$\theta$};
\draw [red] (2,0)--(2,3) node[right]{$\uparrow 0$};
\end{tikzpicture}\caption{The family of circles $\Gamma_\theta$ }
\end{figure}
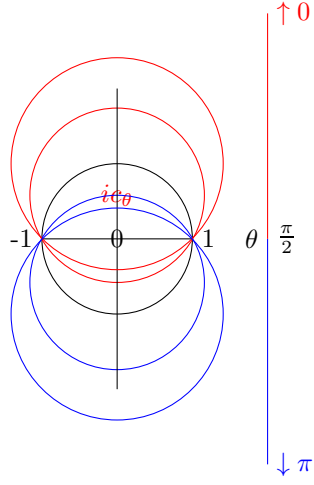

One of the main results here is that the value \begin{equation}\label{keyvalue} I_{\theta}  =  -\frac{1}{\pi}\int_{D_{\theta} }
\frac{1}{\overline{u+1} (u-1)}  da(u)\end{equation} is unique in the set of values (\ref{reducedvalues}). A proof of this uniqueness will be given in the next section. But first, we compute the value $I_{\theta}.$ The identity $I_{\pi-\theta}=\overline{I_{\theta}}$ allows one to restrict to the case $0<\theta\leq\frac{\pi}{2}.$ This computation can be accomplished by using the polar coordinates \[ z(r,t) = re^{it}+i\cot(\theta), (r,t)\in (0,\csc(\theta))\times (-\pi, \pi). \] Some care must be taken with the singularities at $u=\pm 1.$  This can be handled by first computing the integral (\ref{keyvalue}) over discs of radius $R$ centered at $i\cot(\theta),$ where $0<R<\csc(\theta)$ and letting $R$ approach $csc(\theta).$ We proceed formally as follows. The above polar coordinates permit one to write the integral (\ref{keyvalue}) in the form
\begin{equation}I_{\theta}= \int_{0}^{\csc (\theta)}\frac{1}{2\pi i}\int_{\vert z\vert =1}\frac{2r}{(r+\tau z)(\tau-rz)}dzdr, \end{equation} where $\tau = i\cot\theta-1=i\csc(\theta) e^{i\theta}.$ For $0\leq r< \csc\theta$ the Cauchy integral formula gives
\begin{equation}I_{\theta}= \int_{0}^{\csc (\theta)}\frac{2r}{r^2+\tau^{2}}dr. \end{equation} Using the principal valued  logarithm function $Log (r^2+\tau^{2})$  ($-\pi <Arg z\leq\pi$) as an antiderivative for this last integral, one finds \begin{equation}\label{keyvalue2} I_{\theta}  =  -\frac{1}{\pi}\int_{D_{\theta} }
\frac{1}{\overline{u+1} (u-1)}  da(u) = \ln(2\sin\theta)+i(\frac{\pi}{2}-\theta).\end{equation} The identity $I_{\pi-\theta}=\overline{I_{\theta}}$  shows this last formula for $I_{\theta}$ is valid for all $\theta$ in the range $0<\theta <\pi.$ 

 The complex plot of the values $I_{\theta}$ for $0<\theta <\pi$ is the following

\begin{figure}[H]
\centering
\label{keyI}

\begin{tikzpicture}
\draw (-2,0)node {$\Omega_{1}$};

\draw[ domain=-1.57:1.57, smooth, variable=\y, blue] plot ({ln(2*cos(\y r)}, {\y});
\draw (0,-1.57) node[below] {$-i\frac{\pi}{2}$} -- (0,1.57) node[above]{$i\frac{\pi}{2}$};
\draw [color=red] (-4,-1.57) -- (1,-1.57);
\draw [color=red] (-4,1.57) -- (1,1.57);
\draw (0,0)node[right]{0};
\draw (.7,0)node[right]{ln2};

\end{tikzpicture}\caption{Plot of $I_{\theta}$}
\end{figure}
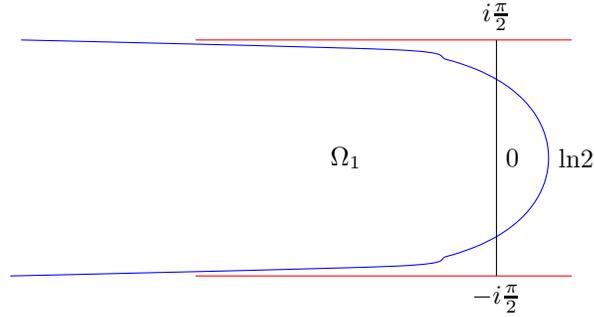 If one parametrizes the circle $\vert z-1\vert =1$ with \[z(\theta) =\  1- e^{-2i\theta},\    0\leq \theta<\pi\] then $Log (z(\theta))= I_\theta, \theta\neq 0.$ That is, the above plot is the image of this circle minus the origin under the principal valued logarithm function. Under this logarithm mapping the inside of the circle is mapped to the open convex region $\Omega_1$ bounded by this plot. Establishing the inequality (\ref{ineq}) is equivalent to showing that for $g\in \mathcal{G}_{1}$ the value $I_{g}$ is in the closure of $\Omega_1.$
\begin{remark} The coarse estimates $\Re (I_g)\leq\ln 2$ and $\vert\Im (I_g)\vert<\frac{\pi}{2}$ that place $I_g$ in the strip $(-\infty, \ln2)\times (-\frac{\pi}{2}, \frac{\pi}{2})$ can be established more directly. See, below.\end{remark}

\section{The set of values $I_g, \ g\in\mathcal{G}_1$} The goals of this section are the following. First to show that if for $g\in \mathcal{G}_1$ and $\theta\in (0,\pi)$ there holds $I_g=I_\theta,$ then $g=g_\theta.$ Second, to show that for $g\neq g_\theta$ the value $I_g$ is in $\Omega_1.$ Once the first goal, i.e. the uniqueness of $I_\theta,$ is established the second follows relatively easily. To establish the first goal a parametrization of the complex plane minus the real axis using the family of circles $\Gamma_\theta, 0<\theta <\pi$ will be used. This parametrization serendipitously turns the integral $I_g$ into a form that provides tractable estimates.

\subsection{A change of variables formula for $I_{g}$}Consider the map \begin{equation}\label{map} u(t,\theta) = \csc(\theta)e^{it} + i\cot(\theta),  (t,\theta)\in (-\pi,\pi ]\times (0,\pi)\end{equation} from the cylinder $\mathcal{K}=(-\pi,\pi]\times (0,\pi)$ to the complex plane. The map $u=u(t,\theta)$ maps the lines $\theta =- t- \frac{\pi}{2},$ $\theta=t+\frac{\pi}{2},$ and $\theta=-t+\frac{3\pi}{2}$ to zero. These lines constitute the zero set $\mathcal{Z}$ of $\sin t +\cos\theta$ on the cylinder. The two open trangular regions on the cylinder bounded by these lines are mapped smoothly one-to-one and onto the upper and lower half-planes. These regions are marked $\mathcal{U}$ and $\mathcal{L}$ in the figure below.

\begin{figure}[H]
\centering
\label{domain}
\begin{tikzpicture}

\draw[dotted] (-5,0)node[left]{$(-\pi, 0)$}--(0,0)node[below]{(0,0)}--(5,0)node[below]{$(\pi,0)$}node[right]{t};
\draw[dotted](-5,0)--(-5,5)node[left]{$(-\pi,\pi)$}node[above]{$\theta$};
\draw[dotted](-5,5)--(5,5);
\draw (5,0)--(5,5);
\draw[color=red][thick] (-5,2.5)--(-2.5,0);
\draw[color=red][thick] (-2.5,0)--(2.5,5);
\draw[color=red][thick] (2.5,5)--(5,2.5);
\draw[color=blue](2.5,2.5)node{$\mathcal{U}$};
\draw[color=blue](-2.5,2.5)node{$\mathcal{L}$};
\draw[color=blue](-4,1)node{$\mathcal{U}$};
\draw[color=blue](4,4)node{$\mathcal{L}$};
\draw(-5,2.5)node[left]{$(-\pi,\frac{\pi}{2})$};
\draw (-2.5,0)node[below]{$(-\frac{\pi}{2},0)$};
\draw (2.5,0)node[below]{$(\frac{\pi}{2},0)$};
\end{tikzpicture}\caption{The domain cylinder $\mathcal{K}$}
\end{figure}
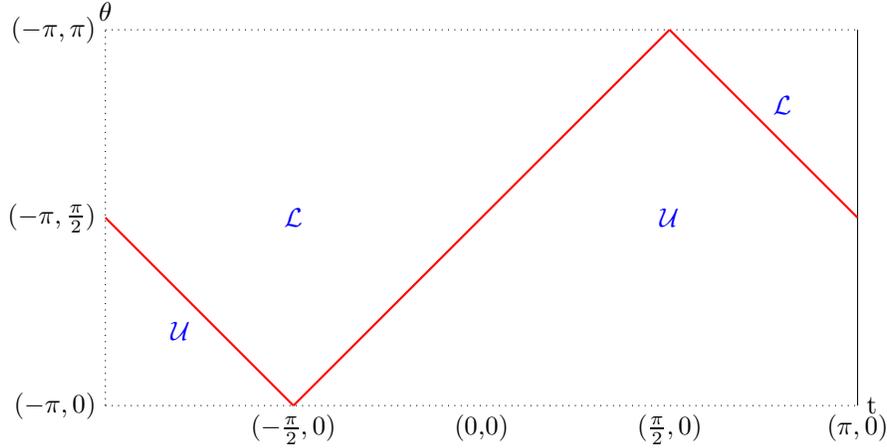

Computation of the Jacobian yields the change of variables formula

\begin{equation}
\label{change}
da(u)=\csc^3 (\theta)\vert\sin t + \cos\theta\vert dtd\theta
\end{equation}
separately on $\mathcal{U}$ and $\mathcal{L}.$ Note that the term $\sin t + \cos\theta$ is positive on $\mathcal{U}$ and negative on $\mathcal{L}.$

Using the formulae (\ref{rep}) and (\ref{rep2}) for the kernel $k$ one finds \begin{equation}\label{parametrized kernel} k(u(t,\theta))=\frac{-1}{2\pi}\frac{\cot\theta - i}{\csc^3(\theta) (\sin t+ \cos\theta)}.\end{equation} this last formula is valid off of the set $\mathcal{Z}.$ The  fortunate occurence of the denominator factor in this last formula matching the change of variables factor in (\ref{change}) alows one to write the integral (\ref{reducedvalues}) in the form
\begin{equation}
\label{iden}I_{g} = \frac{1}{2\pi} \int_{\mathcal{U}}\ (-\cot(\theta) + i) g(u(t,\theta ))dtd\theta + \frac{1}{2\pi} \int_{\mathcal{L}} (\cot(\theta) - i) g(u(t,\theta ))dtd\theta .
\end{equation}
At this point (\ref{iden}) is only claimed to be true for $g$ with compact essential support off the real axis.
We first address the existence of the integral on the right side of this last identity.
\begin{lem} Let $g$ be a bounded measurable function with bounded essential support in the plane. Then the integral \begin{equation} \frac{1}{2\pi} \int_{\mathcal{U}\cup\mathcal{L}} \vert\cot(\theta)\vert  g(u(t,\theta ))dtd\theta\end{equation} exists and is finite.\end{lem}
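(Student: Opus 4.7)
The plan is to reduce the cylindrical integral back to an integral over the $u$-plane via the change of variables on $\mathcal{U}$ and $\mathcal{L}$ separately, and then bound the resulting integrand by a locally integrable function with mild singularities at $u=\pm 1$.

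First, I would read the identity (\ref{parametrized kernel}) as giving the size of the factor $\csc^{3}(\theta)|\sin t+\cos\theta|$ in terms of the kernel $k$: taking moduli in (\ref{parametrized kernel}),
\[
|k(u(t,\theta))| \;=\; \frac{1}{2\pi}\frac{\csc(\theta)}{\csc^{3}(\theta)\,|\sin t+\cos\theta|},
\]
so combined with $|k(u)|=\frac{1}{\pi|u+1|\,|u-1|}$ one obtains the clean identity
\[
\csc^{3}(\theta)\,|\sin t+\cos\theta| \;=\; \tfrac{1}{2}\csc(\theta)\,|u^{2}-1|
\qquad\text{(off }\mathcal{Z}\text{).}
\]
From this,
\[
\frac{|\cot\theta|}{\csc^{3}(\theta)\,|\sin t+\cos\theta|}
\;=\; \frac{2|\cos\theta|}{|u^{2}-1|}\;\le\;\frac{2}{|u-1|\,|u+1|}.
\]

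Next, since $u(t,\theta)$ maps $\mathcal{U}$ diffeomorphically onto the upper half-plane and $\mathcal{L}$ onto the lower half-plane, I would apply the Jacobian formula (\ref{change}) on each region separately. This gives
\[
\frac{1}{2\pi}\int_{\mathcal{U}\cup\mathcal{L}}|\cot(\theta)|\,g(u(t,\theta))\,dt\,d\theta
\;\le\; \frac{1}{\pi}\int_{\mathbb{C}\setminus\mathbb{R}}\frac{g(u)}{|u-1|\,|u+1|}\,da(u).
\]

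Finally, since $g$ is bounded (say by $M$) and essentially supported in some bounded set $K$, the right-hand side is dominated by
\[
\frac{M}{\pi}\int_{K}\frac{da(u)}{|u-1|\,|u+1|},
\]
which is finite: $\frac{1}{|u-1|\,|u+1|}$ is bounded on $K$ away from $\{\pm 1\}$, and the singularities at $\pm 1$ are each of the locally integrable form $|u\mp 1|^{-1}$ in the plane.

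The only subtle point is applying the change of variables on the unbounded regions $\mathcal{U},\mathcal{L}$ despite the map blowing up as $\theta\to 0^{+}$ or $\theta\to\pi^{-}$. This is handled by a routine exhaustion: apply the diffeomorphism on compact subregions $\{\epsilon<\theta<\pi-\epsilon\}\cap\mathcal{U}$ (and similarly for $\mathcal{L}$), pass to the limit $\epsilon\to 0$ by monotone convergence since the integrand is nonnegative, and note that the resulting $u$-side bound is independent of $\epsilon$. This is the only mildly delicate step; the inequality $|\cos\theta|\le 1$ does all the real work.
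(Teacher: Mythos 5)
Your proof is correct, but it takes a genuinely different route from the paper's. You transport the integral back to the $u$-plane: combining (\ref{parametrized kernel}) with the definition (\ref{ker}) of $k$ gives the exact identity $\csc^{3}(\theta)\vert\sin t+\cos\theta\vert=\tfrac{1}{2}\csc(\theta)\vert u^{2}-1\vert$ off $\mathcal{Z}$ (one can also check this directly from $u^{2}-1=2i\csc^{2}(\theta)e^{it}(\sin t+\cos\theta)$), so that $\vert\cot\theta\vert\,dt\,d\theta$ pulls back to $2\vert\cos\theta\vert\,\vert u^{2}-1\vert^{-1}da(u)\leq 2\vert u^{2}-1\vert^{-1}da(u)$ on each of $\mathcal{U}$ and $\mathcal{L}$, and everything reduces to the local integrability of $\vert u-1\vert^{-1}\vert u+1\vert^{-1}$ in the plane. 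The paper instead stays on the cylinder: it reduces by comparison to $g=\chi_{\{\vert u\vert<R\}}$, describes the preimage region $\mathcal{R}_{+}$ explicitly, and splits the iterated integral into three ranges of $\theta$, the delicate one being $\theta\to 0^{+}$, where a Mean Value Theorem estimate on $\arcsin$ shows the width $s_{1}(\theta)-t_{1}(\theta)$ is at most $(M-1)\tan\theta$, so that $\cot(\theta)(s_{1}(\theta)-t_{1}(\theta))$ stays bounded. Your argument is shorter and makes the source of finiteness transparent (the only singularities are Cauchy-kernel type at $\pm1$, integrable in two dimensions), at the cost of invoking the change of variables in the reverse direction on the unbounded regions; as you note, this is justified by exhaustion, or more simply by the change-of-variables theorem for nonnegative measurable functions on open sets, which needs no limiting argument at all. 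One small point: since $g$ is only assumed bounded and measurable, not nonnegative, you should run the estimate with $\vert g\vert$ in place of $g$ to conclude absolute convergence; with that adjustment the argument is complete.
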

\begin{proof} It follows from the comparision test for improper integrals that it is sufficient to establish the claim for the case where $g$ is the characteristic function of a disc centered at $0$ of radius $R>1.$ Indeed, using symmetry it is sufficient to establish the result in this case for the pre-image of  of this disc under $u(t,\theta)$ in $\mathcal{U}.$ Let $M=\frac{R^2+1}{2}.$ 
The portion of the preimage of the open disc of radius $R$ in $\mathcal{U}$ is the domain $\mathcal{R}_{+}$ bounded by the lines $t=t_{1}(\theta)=\theta-\frac{\pi}{2},$ and $t=t_{2}(\theta)=\frac{3\pi}{2}-\theta$ for $0<\theta<\pi$ and the curves  \begin{equation}s_{1}(\theta)=\arcsin(\frac{M\sin^2(\theta)-1}{\cos(\theta)}), \ s_{2}(\theta)=\pi-s_{1}(\theta)\end{equation} defined for $0<\theta<\theta_{R}\leq\arccos(\frac{R^2-1}{R^2+1}).$  This value $\theta_{R}$ is the largest positive value of $\theta$ where the circle $\Gamma_{\theta}$ intersects the closed disc of radius $R.$ The notation $\theta_{(R,0)}$ will be used for the value $\arcsin(\frac{2}{M})^{\frac{1}{2}},$ where $s_{1}(\theta) =0.$  Fubini's Theorem can be used to estimate \begin{equation}
\int_{\mathcal{R}_{+}}\vert\cot(\theta)\vert d\theta dt. \end{equation} This estimate will be handled in three portions. For the portion of $\mathcal{R}_+$ in the range $\frac{\pi}{2}<\pi$ the iterated integral has the form \begin{equation} -\int_{\frac{\pi}{2}}^{\pi} \cot(\theta)\left(\int_{t_{1}(\theta)}^{t_{2}(\theta)} dt\right) d\theta = -\int_{\frac{\pi}{2}}^{\pi}\cot(\theta) 2(\pi -\theta) d\theta\end{equation} which is finite. For the portion of $\mathcal{R}_{+}$ in the range  $\theta_{(R,0)} \leq\theta\leq\frac{\pi}{2}$ the function $\cot(\theta)$ is bounded and the corresponding integral is finite. For the portion of the integral in the range $0<\theta<\theta_{(R,0)}$ (the region denoted $\mathcal{S}$ in the schematic figure below) the integral has the form
 \begin{equation} 2\int_{0}^{\theta_{(R,0)}} \cot(\theta)\left(\int_{t_{1}(\theta)}^{s_{1}(\theta)} dt\right) d\theta =2 \int_{0}^{\theta_{(R,0)}}\cot(\theta) (s_{1}(\theta)-t_{1}(\theta)) d\theta,\end{equation} where the factor $2$ arises from the symmetry of $\mathcal{R}_{+}$ in the line $t=\frac{\pi}{2}.$ The proof will be complete once it is shown that $\cot(\theta) (s_{1}(\theta)-t_{1}(\theta)) $ is bounded on $0<\theta<\theta_{(R,0)}.$ This follows from the estimate \begin{equation} \begin{gathered}
s_{1}(\theta) -t_{1}(\theta)=\arcsin(\frac{M\sin^2(\theta)-1}{\cos(\theta)}) -\arcsin(-\cos(\theta)) \leq \\ \frac{1}{(1-\cos^2(\theta))^{\frac{1}{2}}}\frac{(M-1)\sin^{2}(\theta)}{\cos(\theta)}= \\ \frac{(M-1)\sin(\theta)}{\cos(\theta)},\end{gathered}\end{equation} where the first inequality follows from the Mean Value Theorem applied to the $\arcsin$ function on the interval \begin{equation} \left[-\cos(\theta), \frac{M\sin^2(\theta)-1}{\cos(\theta)}\right].
\end{equation}.
\end{proof}
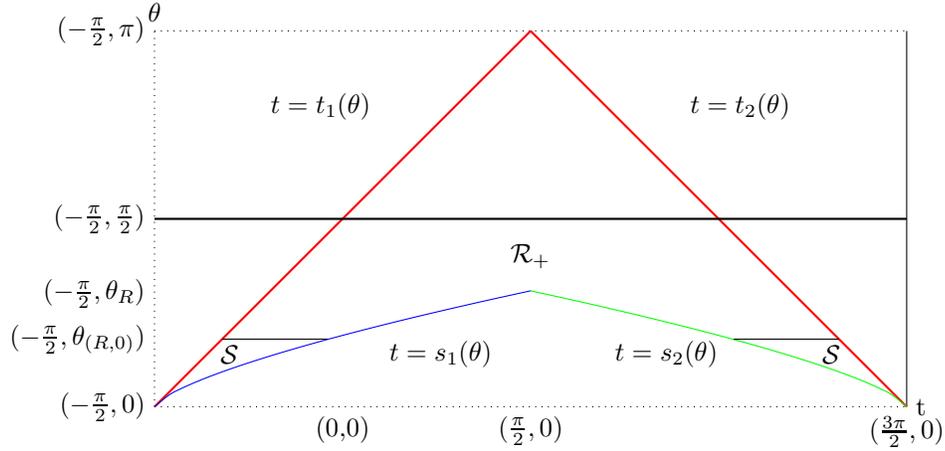
\begin{figure}[H]
\centering
\label{domain1}
\begin{tikzpicture}
\draw[dotted] (-5,0)node[left]{$(-\frac{\pi}{2}, 0)$}--(0,0)node[below]{$(\frac{\pi}{2},0)$}--(5,0)node[below]{$(\frac{3\pi}{2},0)$}node[right]{t};
\draw(-2.5,0)node[below]{(0,0)};
\draw[dotted](-5,0)--(-5,5)node[left]{$(-\frac{\pi}{2},\pi)$}node[above]{$\theta$};
\draw[dotted](-5,5)--(5,5);
\draw (5,0)--(5,5);
\draw[color=red][thick](-5,0)--(0,5);
\draw[color=red][thick](0,5)--(5,0);
\draw[color=black][thick](-5,2.5)--(5,2.5);
\draw[ domain=-5:0, smooth, variable=\x, blue] plot ({\x},{.5*(\x+5)^.7});
\draw[ domain=0:5, smooth, variable=\x, green] plot ({\x},{.5*(-\x+5)^.7});
\draw(-2,4)node[left]{$t=t_{1}(\theta)$};
\draw(2,4)node[right]{$t=t_{2}(\theta)$};
\draw(-2,.7)node[right]{$t=s_{1}(\theta)$};
\draw(1,.7)node[right]{$t=s_{2}(\theta)$};
\draw(-5,2.5)node[left]{$(-\frac{\pi}{2},\frac{\pi}{2})$};
\draw(0,2)node{$\mathcal{R}_{+}$};
\draw(-5,1.5)node[left]{$(-\frac{\pi}{2},\theta_{R})$};
\draw(-5,.9)node[left]{$(-\frac{\pi}{2},\theta_{(R,0)})$};
\draw(-4,.45)node[above]{$\mathcal{S}$};
\draw(4,.45)node[above]{$\mathcal{S}$};
\draw(-4.1,.9)--(-2.7,.9);
\draw(2.7,.9)--(4.1,.9);
\end{tikzpicture}
\caption{Preimage of upper-half of disc of radius $R$}
\end{figure}

It now follows from a limit argument that the identity (\ref{iden}) holds for any bounded measurable function $g$ with compact essential support.
\begin{remark}  The coarse estimates $\Re (I_g)\leq\ln 2$ and $\vert\Im (I_g)\vert<\frac{\pi}{2}$ can be established using (\ref{iden}). For example, the estimate  $\vert\Im (I_g)\vert<\frac{\pi}{2}$ follows easily from the fact that the areas of the triangles $\mathcal{U}$ and $\mathcal{L}$  are $\pi^{2}.$

\end{remark}
\subsection{Uniqueness of $I_{\theta}$}

We now turn to our first goal in this section of establishing that $I_{\theta}$ is uniquely attained when $g=g_{\theta}.$
\begin{prop} Let $g$ be in $\mathcal{G}_{1}.$ If $I_{g} = I_{\phi}$ for some $\phi\in (0,\pi),$ then $g=g_{\phi}.$
\end{prop}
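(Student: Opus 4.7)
The plan is to apply the change-of-variables identity (\ref{iden}) to $I_g - I_\phi$ and exploit the sign structure of the difference $h := g - g_\phi$. The first step is to describe $g_\phi = \chi_{D_\phi}$ in the $(t,\theta)$ coordinates. The condition $u \in D_\phi$ is equivalent to $|u|^2 - 1 \leq 2\,\mathrm{Im}(u)\cot\phi$. Using the identification $\cot\theta(u) = (|u|^2-1)/(2\,\mathrm{Im}(u))$ for the unique circle $\Gamma_{\theta(u)}$ through $u$, and dividing by the sign-definite quantity $2\,\mathrm{Im}(u)$, one sees that for $(t,\theta) \in \mathcal{U}$ one has $u(t,\theta) \in D_\phi$ iff $\theta \geq \phi$, whereas for $(t,\theta) \in \mathcal{L}$ one has $u(t,\theta) \in D_\phi$ iff $\theta \leq \phi$. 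Since $0 \leq g \leq 1$ and $g_\phi \in \{0,1\}$, this yields the sign pattern for $h$: on $\mathcal{U}$, $h \leq 0$ when $\theta \geq \phi$ and $h \geq 0$ when $\theta < \phi$; on $\mathcal{L}$, these are reversed.

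Next I would use the assumption $I_g = I_\phi$ to extract two integral identities. Define the slice function
\[
F(\theta) := \int_{\{t \,:\, (t,\theta) \in \mathcal{U}\}} h(u(t,\theta))\, dt \; - \; \int_{\{t \,:\, (t,\theta) \in \mathcal{L}\}} h(u(t,\theta))\, dt.
\]
Then, reading off the imaginary and real parts of $I_g - I_\phi = 0$ from (\ref{iden}), one obtains $\int_0^\pi F(\theta)\, d\theta = 0$ and $\int_0^\pi \cot\theta\, F(\theta)\, d\theta = 0$. Moreover, the sign pattern for $h$ established above gives $F(\theta) \geq 0$ on $(0,\phi)$ and $F(\theta) \leq 0$ on $(\phi,\pi)$.

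The finishing move is to subtract $\cot\phi$ times the first identity from the second, yielding
\[
\int_0^\pi \bigl(\cot\theta - \cot\phi\bigr)\, F(\theta)\, d\theta = 0.
\]
Since $\cot$ is strictly decreasing on $(0,\pi)$, the weight $\cot\theta - \cot\phi$ has the same sign as $\phi - \theta$, so the integrand is nonnegative a.e. A nonnegative integrand with zero integral must vanish a.e., hence $F \equiv 0$ a.e. But the two summands defining $F$ have opposite signs on each slice, so each must vanish a.e.; since $h$ has constant sign on each slice, this forces $h = 0$ a.e. on $\mathcal{U} \cup \mathcal{L}$. The complement $\mathcal{Z}$ has planar measure zero, and $u(t,\theta)$ carries $\mathcal{U} \cup \mathcal{L}$ bijectively onto $\mathbb{C} \setminus \mathbb{R}$, so $g = g_\phi$ a.e., as required.

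The main obstacle, I expect, is the first step: getting the $(t,\theta)$-description of $D_\phi$ correct on each of $\mathcal{U}$ and $\mathcal{L}$, and then checking that the two sign flips (the one between $\mathcal{U}$ and $\mathcal{L}$, and the one between $\theta < \phi$ and $\theta > \phi$) combine with the monotonicity of $\cot\theta$ to make the final integrand genuinely nonnegative. Once this bookkeeping is in place, the rest is a standard ``nonnegative integrand with zero integral'' argument.
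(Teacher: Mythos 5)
Your proposal is correct, and it proves the proposition by a genuinely cleaner route than the paper's, built on the same two pillars: the change-of-variables identity (\ref{iden}) and the strict monotonicity of $\cot$ on $(0,\pi).$ Your opening step --- on $\mathcal{U}$, $u(t,\theta)\in D_\phi$ iff $\theta>\phi$, while on $\mathcal{L}$ iff $\theta<\phi$, obtained from $\cot\theta(u)=(\vert u\vert^2-1)/(2\,\mathrm{Im}\,u)$ --- is exactly the content of the paper's identification of $g_\phi(u(t,\theta))$ with the characteristic function of $\mathcal{C}\cup\mathcal{D}\cup\mathcal{E}.$ Where you diverge is the bookkeeping. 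The paper reduces to $0<\phi<\frac{\pi}{2}$, cuts the cylinder into six regions $\mathcal{A}$--$\mathcal{F}$ according to the sign and size of $\cot\theta$ relative to $\cot\phi$, and runs a four-part chain of inequalities with weights $1$ and $\cot\theta/\cot\phi$; you compress all of this into the slice function $F$ and the single identity $\int_0^\pi(\cot\theta-\cot\phi)F(\theta)\,d\theta=0$, whose integrand is pointwise nonnegative. These are the same mechanism --- the paper's inequality chain is your weighted integral unpacked region by region --- but your packaging buys two real advantages: you never divide by $\cot\phi$, so the argument covers $\phi=\frac{\pi}{2}$ uniformly (a case the paper's reduction, resting on the conjugation symmetry $I_{\pi-\theta}=\overline{I_\theta}$, does not literally reach, since $\pi/2$ is fixed by that symmetry and $\cot\theta/\cot\phi$ is then undefined), and the six-region case analysis collapses to a one-line ``nonnegative integrand with zero integral'' argument. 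One piece of housekeeping you should make explicit: defining $F(\theta)$ for a.e.\ $\theta$, applying Fubini slice-wise, and forming the linear combination of the real and imaginary parts of $I_g-I_\phi$ all require the absolute convergence of $\int\vert\cot\theta\vert\,\vert h\vert\,dt\,d\theta$, which is exactly what the paper's integrability lemma (Lemma 2.1) supplies, applied to $\vert h\vert\leq\chi_K$ for a compact $K$ containing the essential supports of $g$ and $g_\phi$; citing (\ref{iden}) alone asserts the values of the integrals but not this slice-wise integrability. With that citation added, your argument is complete.
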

\begin{proof}
It is sufficient to consider the case $0<\phi<\frac{\pi}{2}.$ We decompose the complex plane minus the real axis into 6 open sets $A--F$ as shown in the figure below.
 
\begin{figure}[H]
\centering
\begin{tikzpicture}

\draw[dotted] (-2.5,0)- - (2.5,0);
\draw(0,0) node[below]{0} circle (1);
\draw[red] (0,1.2) node[right]{$icot_{\phi}$};\draw[red] (0,1.2) circle (1.56);
\draw(-1,-1.5)node{A};
\draw(-.5,-.5)node{B};
\draw(.3,-.2)node{C};
\draw(-.5,.5)node{D};
\draw(-1,1.5)node{E};
\draw(-1.8,2)node{F};
\draw(-1.1,0)node[below]{-1};
\draw(1.1,0)node[below]{1};

\end{tikzpicture}
\end{figure}
The corresponding decomposition $\mathcal{A}--\mathcal{F}$ on the $(t,\theta)$ domain cylinder is shown in the figure below.
\begin{figure}[H]
\centering
\label{domain}
\begin{tikzpicture}

\draw[dotted] (-5,0)node[left]{$(-\pi, 0)$}--(0,0)node[below]{(0,0)}--(5,0)node[below]{$(\pi,0)$}node[right]{t};
\draw[dotted](-5,0)--(-5,5)node[left]{$(-\pi,\pi)$}node[above]{$\theta$};
\draw[dotted](-5,5)--(5,5);
\draw(-5,1.8)--(5,1.8);
\draw(-5,2.5)--(5,2.5);
\draw (5,0)--(5,5);
\draw[color=red][thick] (-5,2.5)--(-2.5,0);
\draw[color=red][thick] (-2.5,0)--(2.5,5);
\draw[color=red][thick] (2.5,5)--(5,2.5);
\draw[color=blue](-2.5,2.1)node{$\mathcal{B}$};
\draw[color=blue](2.5,4)node{$\mathcal{D}$};
\draw[color=blue](-2.5,4)node{$\mathcal{A}$};
\draw[color=blue](-4,1)node{$\mathcal{F}$};
\draw[color=blue](-2.5,1)node{$\mathcal{C}$};
\draw[color=blue](2.5,2.1)node{$\mathcal{E}$};
\draw[color=blue](4,4)node{$\mathcal{A}$};
\draw(-5,2.5)node[left]{$(-\pi,\frac{\pi}{2})$};
\draw (-2.5,0)node[below]{$(-\frac{\pi}{2},0)$};
\draw (2.5,0)node[below]{$(\frac{\pi}{2},0)$};
\draw[color=blue](2.5,1)node{$\mathcal{F}$};
\draw(-5,1.8)node[left]{$(-\pi,\phi)$};

\end{tikzpicture}
\end{figure}

The equation [\ref{iden}] allows one to transfer the equation $I_{g} = I_{\phi}$ into the following identity on the cylinder $\mathcal{K}:$ \begin{equation}\begin{gathered}
\frac{1}{2\pi} \int_{\mathcal{U}} (-\cot(\theta) + i)(g_{\phi}(u(t,\theta))  - h(t,\theta ))dtd\theta + \\ \frac{1}{2\pi} \int_{\mathcal{L}} (\cot(\theta) - i)( g_{\phi}(u(t,\theta ))-h(t,\theta))dtd\theta =0 , \end{gathered}
\end{equation} where the notation $h(t,\theta) = g(u(t,\theta))$ is being used. Note that $g_{\phi}(u(t,\theta))$ is the characteristic function of the set $\mathcal{C}\cup\mathcal{D}\cup\mathcal{E}.$ Separating the real and imaginary parts and using the facts that \begin{equation}\begin{gathered}\cot(\phi)>0,\ \ \cot(\theta)<0,\ \ \theta\in\mathcal{A}\cup\mathcal{D}\\ \ 0< \cot(\theta)<\cot(\phi), \ \ \theta\in\mathcal{B}\cup\mathcal{E} \\  \cot(\phi)<\cot(\theta), \ \ \theta\in\mathcal{C}\cup\mathcal{F}\end{gathered} \end{equation} one can deduce the following four part string of inequalities:
\begin{equation}\begin{gathered} -\int_{\mathcal{A}}h(t,\theta) dtd\theta - \int_{\mathcal{D}}(1-h(t,\theta) )dtd\theta + \int_{\mathcal{C}}(1-h(t,\theta) )dtd\theta +\int_{\mathcal{F}}h(t,\theta) dtd\theta \leq  \\
 -\int_{\mathcal{A}}h(t,\theta)\frac{\cot(\theta)}{\cot(\phi)}dtd\theta -\int_{\mathcal{D}}(1-h(t,\theta))\frac{\cot(\theta)}{\cot(\phi)}dtd\theta +\int_{\mathcal{C}}(1-h(t,\theta))\frac{\cot(\theta)}{\cot(\phi)}dtd\theta +\int_{\mathcal{F}}h(t,\theta)\frac{\cot(\theta)}{\cot(\phi)}dtd\theta = \\ \int_{\mathcal{B}}h(t,\theta)\frac{\cot(\theta)}{\cot(\phi)}dtd\theta +\int_{\mathcal{E}}(1-h(t,\theta))\frac{\cot(\theta)}{\cot(\phi)}dtd\theta\leq \\
\int_{\mathcal{B}}h(t,\theta)dtd\theta +\int_{\mathcal{E}}(1-h(t,\theta))dtd\theta
\end{gathered}
\end{equation}
(note the $2\pi$ factor has been dropped). The first and last parts in this string of inequalities would be equal since their difference is the imaginary part of $2\pi(I_{g}-I_{\phi)}$ and the middle equality is a result of the assumption that the real part of $I_{g}-I_{\phi}$ is zero. Since the ratio $\frac{\cot(\theta)}{\cot(\phi)}$ is less than $1$ on $\mathcal{B}\cup\mathcal{E}$ the last inequality can hold only if $h=0$ a.e, on $\mathcal{B}$ and $h=1$ a.e. on $\mathcal{E}.$ This also means that the first part of the above string is zero. Using the fact that each term in the second part is non-negative, one concludes that $h=0$ a.e. on $\mathcal{A}\cup\mathcal{F}$ and $h=1$ a.e. on $\mathcal{C}\cup\mathcal{D}.$ This completes the proof.
\end{proof}
\begin{remark} One can verify the identity (\ref{keyvalue2}) by computing the iterated integral of $\frac{\pm 1}{2\pi} (\cot(\theta)-i)$ with the appropriate sign choice over  $\mathcal{C}\cup\mathcal{D}\cup\mathcal{E}.$ The computation involves doing the $theta$ integral first. \end{remark}
The next proposition establishes the second goal of the section and implies the inequality \begin{equation} \vert 1-\exp C{g}(z,w)\vert\leq 1\end{equation} for all $z,w$ for $g\in\mathcal{G}_{1}.$
\begin{prop} Let $g$ be in $\mathcal{G}_{1}.$ Assume $g\neq g_{\theta}$ for all $0<\theta<\pi.$ Then the value
\[I_{g}=  -\frac{1}{\pi}\int_{\mathbb{C} }
\frac{g(u)}{\overline{u+1} (u-1)}  da(u),  \] is in the open convex region $\Omega_{1}$ bounded by the curve $\Gamma_{0}$ paraetrized by \[z(\theta) =\ln(2\sin\theta) +i(\frac{\pi}{2} -\theta), \ \  0<\theta <\pi.\]
\end{prop}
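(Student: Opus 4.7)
The plan is to reduce the claim to the preceding uniqueness proposition via a supporting-line argument. If one shows that the convex set $V = \{I_g : g \in \mathcal{G}_1\}$ lies in $\overline{\Omega_1}$, then in the event $I_g \notin \Omega_1$ we would have $I_g \in \partial\Omega_1 = \{I_\phi : \phi \in (0,\pi)\}$, say $I_g = I_{\phi_0}$, whereupon the preceding proposition forces $g = g_{\phi_0}$, contradicting the hypothesis.

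The central step is to show that, for each $\phi \in (0,\pi)$, the tangent line to $\partial\Omega_1$ at $I_\phi$ supports $V$. From $z'(\phi) = \cot\phi - i$ the outward normal to $\partial\Omega_1$ at $I_\phi$ is a positive multiple of $1 + i\cot\phi$, so the inequality to verify is
\[
\Re\bigl((1 - i\cot\phi)(I_g - I_\phi)\bigr) \leq 0 \qquad (g \in \mathcal{G}_1).
\]
Using identity (\ref{iden}), the left side equals
\[
\frac{1}{2\pi}\int_{\mathcal{U}}(\cot\phi - \cot\theta)(g - g_\phi)\,dt\,d\theta \;+\; \frac{1}{2\pi}\int_{\mathcal{L}}(\cot\theta - \cot\phi)(g - g_\phi)\,dt\,d\theta.
\]
The relation $u \in \Gamma_\theta$ gives $\cot\theta = (|u|^2 - 1)/(2\,\Im u)$, so $u \in D_\phi$ is equivalent to $\Im u\cdot(\cot\theta - \cot\phi) < 0$; hence $g_\phi \circ u = \chi_{\{\theta > \phi\}}$ on $\mathcal{U}$ and $g_\phi \circ u = \chi_{\{\theta < \phi\}}$ on $\mathcal{L}$. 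A sign check then shows both integrands are pointwise non-positive: on $\mathcal{U}$, the coefficient $\cot\phi - \cot\theta$ and the value $g_\phi$ agree in sign on $\{\theta\lessgtr \phi\}$, so $(g - g_\phi)$ and $(\cot\phi - \cot\theta)$ have opposite signs; symmetrically on $\mathcal{L}$.

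With these supporting inequalities $V$ lies in the intersection of the closed half-planes $H_\phi = \{w : \Re((1 - i\cot\phi)(w - I_\phi)) \leq 0\}$. A short direct computation, \[\Re\bigl((1 - i\cot\phi)(w - I_\phi)\bigr) = (\Re w - \ln(2\sin\phi)) + \cot\phi\,(\Im w - \pi/2 + \phi),\] shows that any point $w$ with $|\Im w| > \pi/2$ forces the right side to $+\infty$ as $\phi \to 0^+$ or $\phi \to \pi^-$, so $\bigcap_\phi H_\phi \subseteq \{|\Im w| \leq \pi/2\}$; since $\overline{\Omega_1}$ is the intersection of its supporting half-planes and its only finite boundary points are the $I_\phi$, we get $\bigcap_\phi H_\phi = \overline{\Omega_1}$. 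This gives $V \subseteq \overline{\Omega_1}$ and completes the proof. I expect the main technical difficulty to be the pointwise sign analysis in the previous paragraph, which rests on the clean geometric identification of $D_\phi$ on the cylinder via the formula for $\cot\theta$.
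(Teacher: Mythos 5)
Your proposal is correct, but it takes a genuinely different route from the paper's. The paper's own proof is a short contradiction via convexity: assuming $I_g\notin\Omega_1$, it mixes $g_\lambda=(1-\lambda)g+\lambda g_{\frac{\pi}{2}}$, so that $I_{g_\lambda}$ traces the segment from $I_g$ to $I_{\frac{\pi}{2}}\in\Gamma_0$, finds $\lambda_0$ with $I_{g_{\lambda_0}}\in\Gamma_0$, and contradicts the uniqueness proposition because $g_{\lambda_0}$ cannot be any $g_\theta$. You instead prove the stronger global containment $\{I_g: g\in\mathcal{G}_1\}\subseteq\overline{\Omega_1}$ outright, by verifying the supporting-line inequality $\Re\bigl((1-i\cot\phi)(I_g-I_\phi)\bigr)\leq 0$ for every $\phi$, and invoke uniqueness only to dispose of boundary attainment; your computations check out: the coefficients $\cot\phi-\cot\theta$ on $\mathcal{U}$ and $\cot\theta-\cot\phi$ on $\mathcal{L}$ are correct, the identification $g_\phi\circ u=\chi_{\{\theta>\phi\}}$ on $\mathcal{U}$ and $\chi_{\{\theta<\phi\}}$ on $\mathcal{L}$ via $\cot\theta=(\vert u\vert^2-1)/(2\,\Im u)$ is exactly right (it matches the paper's region bookkeeping $\mathcal{C}\cup\mathcal{D}\cup\mathcal{E}$ in the uniqueness proof), and the envelope computation confirms $\bigcap_\phi H_\phi=\overline{\Omega_1}$ (minimizing $\ln(2\sin\phi)-\cot\phi(\Im w-\frac{\pi}{2}+\phi)$ over $\phi$ at $\phi=\frac{\pi}{2}-\Im w$ returns $\ln(2\cos(\Im w))$, and your limiting argument as $\phi\to 0^+,\pi^-$ in fact also excludes $\vert\Im w\vert=\frac{\pi}{2}$, since $-\ln(2\sin\phi)+\phi\cot\phi\to+\infty$). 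What each approach buys: the paper's proof is shorter, but it leans implicitly on two points your argument avoids --- that the segment from $I_g$ toward $\ln 2$ actually meets $\Gamma_0$ at some $\lambda_0<1$ (if it touched only at $\lambda_0=1$ one would get $g_{\lambda_0}=g_{\frac{\pi}{2}}$ and no contradiction; ruling this out uses the coarse strip estimates and a tangency argument near $\ln 2$), and the extreme-point fact that a proper convex combination equal to a characteristic function forces $g$ itself to equal it. Your route is longer but self-contained and yields, as a by-product, an independent proof of the containment underlying inequality (\ref{ineq}); note also that your pointwise sign analysis is essentially the same mechanism as the four-part inequality string in the paper's uniqueness proposition, repackaged as a supporting-hyperplane statement. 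Two small points you should make explicit: the absolute convergence needed to split $I_g-I_\phi$ under the integrals in (\ref{iden}) is supplied by the paper's Lemma 2.1, and the identity $\bigcap_\phi H_\phi=\overline{\Omega_1}$ rests on the nearest-point (supporting half-plane) characterization of closed convex sets together with the fact that $\partial\Omega_1$ consists exactly of the points $I_\phi$, where smoothness of $\Gamma_0$ makes $H_\phi$ the unique supporting half-plane.
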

\begin{proof} The argument is by contradiction. Assume the value $I_g$ is not in $\Omega_{1}.$ For $0\leq\lambda\leq 1$ let $g_{\lambda}=(1-\lambda)g +\lambda g_{\frac{\pi}{2}}.$  Note that $g_{\lambda}$ is in $\mathcal{G}_{1}$ for all $\lambda\in [0,1].$
For some $\lambda_{0}$ the value $I_{g_{\lambda_{0}}}$ is in $\Gamma_{0}.$ However, by assumption $g_{\lambda_{0}}$ cannot equal $g_{\theta}$ for any $\theta.$ This contradicts the uniqueness of the values $I_{\theta}$ and completes the proof.
\end{proof}

\begin{cor} For all $z,w$  the inequality \[ \vert 1-\exp C{g}(z,w)\vert\leq 1\] holds. Moreover, it follows that equality holds in this inequality if and only if $g$ is the characteristic function of $\frac{1}{2}((z-w)D_{\theta} +(z+w))$ for some $\theta.$\end{cor}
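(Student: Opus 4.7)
The plan is to package Proposition~2.3 with the linear change of variables recorded in the introduction. For $z\neq w$ I would apply the substitution $u(v)=\tfrac12((z-w)v+(z+w))$, which identifies $C_g(z,w)$ with $I_{\widetilde g}$ where $\widetilde g(v)=g(u(v))\in\mathcal{G}_1$; the $\epsilon$-exclusion argument from the introduction handles the singularities at $\pm 1$. Hence it suffices to prove $|1-\exp I_{\widetilde g}|\le 1$ for $\widetilde g\in\mathcal{G}_1$, with equality iff $\widetilde g=g_\phi$ for some $\phi\in(0,\pi)$, and then to translate the equality condition back through the change of variables.

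Next I would exploit the identification noted just after~(\ref{keyvalue2}): since $\operatorname{Log}(1-e^{-2i\theta})=\ln(2\sin\theta)+i(\pi/2-\theta)=I_\theta$, the curve $\Gamma_0$ is precisely $\operatorname{Log}$ of the punctured circle $|\zeta-1|=1$, $\zeta\ne 0$, and $\Omega_1=\operatorname{Log}(\{0<|\zeta-1|<1\})$. Since $\overline{\Omega_1}$ lies in the horizontal strip $|\operatorname{Im}\zeta|\le\pi/2$, the exponential is injective on $\overline{\Omega_1}$ and carries it bijectively onto $\overline{D(1,1)}\setminus\{0\}$, where $D(1,1)$ is the open disc of radius one centered at one. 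This yields the equivalence
\[
\zeta\in\overline{\Omega_1}\ \Longleftrightarrow\ |1-e^{\zeta}|\le 1,
\]
with equality on the right iff $\zeta\in\Gamma_0$. Combining Proposition~2.3 (which gives $I_{\widetilde g}\in\Omega_1$ for $\widetilde g\ne g_\theta$) with the explicit values $I_\theta\in\Gamma_0$ shows $I_{\widetilde g}\in\overline{\Omega_1}$ for every $\widetilde g\in\mathcal{G}_1$, whence $|1-\exp C_g(z,w)|\le 1$ when $z\ne w$. For the degenerate case $z=w$, the definition of $E_g(w,w)$ in the introduction makes $\exp C_g(w,w)$ either $0$ or a positive real at most $1$, so the inequality is immediate.

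For the equality characterization at $z\neq w$: if $|1-\exp C_g(z,w)|=1$, then $I_{\widetilde g}$ must land on $\Gamma_0$, since Lemma~2.1 guarantees that $I_{\widetilde g}$ is a finite complex number (ruling out $e^{I_{\widetilde g}}=0$). Proposition~2.3 then forces $\widetilde g=g_\phi$ for some $\phi\in(0,\pi)$, which on unwinding the change of variables means $g=\chi_{\tfrac12((z-w)D_\phi+(z+w))}$. The converse direction is the explicit computation~(\ref{keyvalue2}).

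The only real obstacle is the bookkeeping that identifies $\overline{\Omega_1}$ with $\operatorname{Log}(\overline{D(1,1)}\setminus\{0\})$ and checks that $I_{\widetilde g}$ cannot escape to the missing point $e^\zeta=0$; once that identification is in place the corollary is an essentially immediate repackaging of Proposition~2.3.
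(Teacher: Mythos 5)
Your proposal is correct and follows exactly the route the paper intends: the corollary is stated there without a separate proof precisely because it is the combination of the affine change of variables reducing $C_g(z,w)$ to $I_{\widetilde g}$, the two propositions placing $I_{\widetilde g}$ in $\overline{\Omega_1}=\Omega_1\cup\Gamma_0$ with the boundary attained only by the $g_\theta$, and the observation that $\exp$ carries $\overline{\Omega_1}$ into the closed disc $\vert 1-\zeta\vert\leq 1$. The only blemish is a harmless mislabeling of the proposition numbers; the mathematical content matches the paper's argument.
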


\subsection{Connection with Operator Theory}
As mentioned in the introduction, there is a close connection between the function $E_{g}$ and operators with one dimensional self-commutator. In this subsection, we will briefly outline this connection. The reader is referred to \cite{Clancey} and the references in \cite{Clancey}, in particular, \cite{Putinar} for more detail. Let $T$ be a bounded irreducible operator on the Hilbert space $\mathcal H$ satisfying $T^*T-TT^* = \varphi\otimes\varphi,$ where $\varphi$ is an element of $\mathcal H.$ It develops that for every $\lambda$ in $\mathbb{C},$ there is a unique solution of the equation $T_{\lambda}^{*}x=\varphi$ orthogonal to the kernel of $T_{\lambda}^{*} = (T-\lambda)^{*},$ which will be denoted $T_{\lambda}^{*-1}\varphi.$ This solution satisfies $\left\Vert T_{\lambda}^{*-1}\varphi \right\Vert \leq1$ for all $\lambda.$ The $\mathcal H$-valued function $T_{\lambda}^{*-1}\varphi$  defined for all $\lambda \in \mathbb{C}$ is called the global-local resolvent associated with the operator $T^*.$  There is a unique $g=g_T \in \mathcal{G}_1$ called the Pincus principal function that satisfies 
\begin{equation} E(z,w) = \exp -\frac{1}{\pi}\int_{\mathbb{C} }
\frac{g(u)}{\overline{u-w} (u-z) }  da(u ) = 1 - (T_{w}^{*-1}\varphi, T_{z}^{*-1}\varphi)\ \ z,w\in\mathbb{C}.\end{equation}  Moreover, given $g\in\mathcal{G}_{1},$ there is a unique (up to unitary equivalence) operator $T$ with one-dimensional self-commutator as above with $g_T=g.$
Using the above formula, the function theoretic inequality \[ \vert 1-\exp C{g}(z,w)\vert\leq 1\]
follows directly from the Cauchy-Schwarz inequality 
for $(T_{w}^{*-1}\varphi, T_{z}^{*-1}\varphi).$  Further, by analyzing the case when $\vert(T_{w}^{*-1}\varphi, T_{z}^{*-1}\varphi)\vert=1,$ where equality holds in the Cauchy-Schwarz inequality, one can give an operator theoretic proof of the uniqueness of $I_{\theta}.$
\begin{remark}There are other function theoretic results about $E_g(z,w)$ that follow from connections with operator theory that await direct proof.  One of these, as stated in \cite{Putinar}, is the positivity of the kernel $K(z,w)=1-E_{g}(z,w).$ The inequality (\ref{ineq}) established here  is related to the positivity of $K(z,w)$ in the $2\times 2$ case. It would appear further function theoretic studies of this positivity will necessitate deeper investigations of the complex Cauchy transforms in (\ref{reducedvalues}).\end{remark}

Department of Mathematics

University of Georgia

Athens, GA 

email: kevinfclancey@gmail.com


\begin{thebibliography}{5}

\bibitem{Clancey} Clancey, Kevin F., ``An exponential kernel associated with operators that have one-dimensional self-commutators" arXiv:1808.09487 [math.FA]
\bibitem{Putinar} Martin, Mircea and Putinar, Mihai, \emph{Lectures on hyponormal operators,} volume 39 of \emph{Operator Theory: Advances and Applications.} Birkh{\"a}user Verlag, Basel, 1989.
\end{thebibliography}
\end{document}